\documentclass[a4paper,oneside]{amsart}
\usepackage[T1]{fontenc}
\usepackage[utf8]{inputenc}
\usepackage[all,ps,arc]{xy}  
\usepackage{amsmath,amssymb,latexsym}
\usepackage{amsfonts}
\usepackage{amsthm}
\usepackage{mathabx}
\usepackage{xcolor}
\usepackage{bbm}
\usepackage[textsize=scriptsize]{todonotes}
\usepackage[shortlabels]{enumitem}
\usepackage{thmtools,mathtools}
\usepackage{nameref,hyperref,cleveref}
\hypersetup{%
    colorlinks=true, linktocpage=true, pdfstartpage=3, pdfstartview=FitV,%
    breaklinks=true, pdfpagemode=UseNone, pageanchor=true, pdfpagemode=UseOutlines,%
    plainpages=false, bookmarksnumbered, bookmarksopen=true, bookmarksopenlevel=1,%
    hypertexnames=true, pdfhighlight=/O,
    urlcolor=webbrown, linkcolor=RoyalBlue, citecolor=webgreen, 
    pdfsubject={},%
    pdfkeywords={},%
    pdfcreator={pdfLaTeX},%
}   \definecolor{webgreen}{rgb}{0,.5,0}
\definecolor{RoyalBlue}{rgb}{0,0,.5}

\newcommand{\point}{\ensuremath{\xymatrix{A\ar@<+.6ex>[r]^(.5){\alpha}
&B\ar@<+.6ex>[l]^(.5){\beta}}}}
\newcommand{\rg}{\ensuremath{\xymatrix{A\ar@<+1ex>[r]^{\alpha}\ar@<-1ex>[r]_{\gamma}&B\ar[l]|{\beta}}}}

\newtheorem{theorem}{Theorem}[section]
\newtheorem{lemma}[theorem]{Lemma}

\newtheorem{corollary}[theorem]{Corollary}
\theoremstyle{remark}
\newtheorem{remark}[theorem]{Remark}
\newtheorem{example}[theorem]{Example}

\theoremstyle{definition}
\newtheorem{definition}[theorem]{Definition}

\setcounter{secnumdepth}{2}

%
%

\newcommand{\df}{\ensuremath{\coloneqq}}
\newcommand{\sA}{\ensuremath{\mathsf A}}
\newcommand{\sB}{\ensuremath{\mathsf B}}

\newcommand{\sV}{\ensuremath{\mathsf V}}
\newcommand{\Pt}{\ensuremath{\mathsf{Pt}}}

\newcommand{\two}{\ensuremath{\mathbf{2}}}

\newcommand{\HSLat}{\ensuremath{\mathsf{HSLat}}}
\newcommand{\HAlg}{\ensuremath{\mathsf{HAlg}}}
\newcommand{\MVAlg}{\ensuremath{\mathsf{MVAlg}}}

\newcommand{\Hoops}{\ensuremath{\mathsf{Hoops}}}
\newcommand{\WHoops}{\ensuremath{\mathsf{WHoops}}}
\newcommand{\PHoops}{\ensuremath{\mathsf{PHoops}}}
\newcommand{\GHoops}{\ensuremath{\mathsf{GHoops}}}
\newcommand{\BHoops}{\ensuremath{\mathsf{BHoops}}}
\newcommand{\IHoops}{\ensuremath{\mathsf{IHoops}}}

\newcommand{\CRing}{\ensuremath{\mathsf{CRing}}}
\newcommand{\CRng}{\ensuremath{\mathsf{CRng}}}
\newcommand{\Ring}{\ensuremath{\mathsf{Ring}}}
\newcommand{\Rng}{\ensuremath{\mathsf{Rng}}}
\newcommand{\BooAlg}{\ensuremath{\mathsf{BooAlg}}}
\newcommand{\BooRng}{\ensuremath{\mathsf{BooRng}}}
\newcommand{\UAss}{\ensuremath{\mathsf{UAss}}}
\newcommand{\Ass}{\ensuremath{\mathsf{Ass}}}
\newcommand{\UCStar}{\ensuremath{\mathsf{UCStar}}}
\newcommand{\CStar}{\ensuremath{\mathsf{CStar}}}

\newcommand{\Ker}{\ensuremath{\text{Ker}}}

\newdir{|>}{{}*!/8pt/@{|}*!/3.5pt/:(1,-.2)@^{>}*!/3.5pt/:(1,+.2)@_{>}}
\newdir{ >}{{}*!/-10pt/@{>}}
\newdir{ |>}{!/10pt/{}*!/4.5pt/@{|}*:(1,-.2)@^{>}*:(1,+.2)@_{>}}

\begin{document}

\def \tm{\!\times\!}

\newenvironment{changemargin}[2]{\begin{list}{}{
\setlength{\topsep}{0pt}
\setlength{\leftmargin}{0pt}
\setlength{\rightmargin}{0pt}
\setlength{\listparindent}{\parindent}
\setlength{\itemindent}{\parindent}
\setlength{\parsep}{0pt plus 1pt}
\addtolength{\leftmargin}{#1}\addtolength{\rightmargin}{#2}
}\item}{\end{list}}

\title[Relative ideals in homological categories]{Relative ideals in homological categories\\ with an application to MV-algebras}
\author[S.~Lapenta]{Serafina Lapenta}
\address{Università di Salerno, Dipartimento di Matematica.  Piazza Renato Caccioppoli, 2 – 84084 Fisciano (SA), Italy}
\email{slapenta@unisa.it}
\author[G.~Metere]{Giuseppe Metere}
\address{Università di Milano, Via Luigi Mangiagalli, 25 – 20133 Milano, Italy}
\email{giuseppe.metere@unimi.it}
\author[L.~Spada]{Luca Spada}
\address{Università di Salerno, Dipartimento di Matematica.  Piazza Renato Caccioppoli, 2 – 84084 Fisciano (SA), Italy}
\email{lspada@unisa.it}

\maketitle
\begin{abstract}
Let $\sA$ be a homological category and $U\colon \sB\to \sA$ be a faithful conservative right adjoint. We introduce the notion of \emph{relative ideal} with respect to $U$ and show that, under suitable conditions,  any object of $\sA$ can be seen as a relative ideal of some object in $\sB$. We then develop a \emph{case study}: we first prove that the category of hoops is semi-abelian and that the category of MV-algebras is protomodular; then we apply our results to the forgetful functor from the category of MV-algebras to the category of Wajsberg hoops.
\end{abstract}


\section{Introduction}
The forgetful functor
\[
U\colon \CRing\to \CRng\,,
\]
from the protomodular category of commutative unital rings to the semi-abelian category of commutative rings has a left adjoint $F$ that freely adds the multiplicative identity. In greater detail, if $R$ is a commutative ring, then $F(R)$ has underlying abelian group given by the direct product $R\times \mathbb{Z}$, endowed with a multiplication defined by the formula
\begin{equation}\label{eq:multiplication}
(r,n)(r',n')\df (rn'+nr'+rr',nn') 	
\end{equation}
and with multiplicative identity  $(0,1)$. It is easy to see that the ring $R$ is contained in $F(R)$ as an ideal.  By the arbitrary choice of $R$, this means that every commutative ring can be seen as an ideal of a suitable \emph{unital} commutative ring. More precisely, one can prove that there is an equivalence of categories $\CRing/\mathbb{Z}\simeq \CRng$ which is defined by taking kernels in $\CRng$ of the ``objects'' in the slice category.  

Now, it is well known that ideals of commutative unital rings make it easier to deal with congruences (and quotients) in $\CRing$, but since they are not subobjects in $\CRing$, it is not immediate to describe  them categorically. On the other hand, ideals of commutative unital rings are subobjects in $\CRng$. This makes it possible to exploit the categorical properties of the semi-abelian category $\CRng$ in order to study the, still protomodular but not pointed, category $\CRing$.
 
The purpose of this article is to  set a study of these facts in a more general framework.  This is described at the beginning of Section \ref{sec:U-ideals}, where we introduce a notion of \emph{$U$-ideal}, with $U\colon \sB \to\sA$ a faithful conservative right adjoint functor, and $\sA$ a regular protomodular category. The varietal case is analyzed in detail in Theorem \ref{p:U-ideal=0-ideal}, where we prove that, under suitable conditions,  $U$-ideals are the same as the $0$-ideals due to Ursini \cite{U}. In Theorem \ref{thm:main} we establish a general version of the above equivalence, when the codomain $\sA$ is a homological category. 

In Section \ref{sec:MV}, we develop a case study. In Theorem \ref{prop:Hoops_is_semi-abelian}, we prove that the category $\Hoops$ is semi-abelian, so that our setting applies to certain categories of interest in algebraic logic, such as Wajsberg, Product, and G\"odel hoops, with respect to MV, Product and G\"odel algebras. We analyze in detail the case of the forgetful functor
\[U\colon \MVAlg\to \WHoops\]
from the category of MV-algebras into the category of Wajsberg hoops. 
In Corollary \ref{cor:MV} we establish an equivalence of categories $\MVAlg/\two\simeq\WHoops$ --- $\two$ being the initial MV-algebra. In particular, this shows that every Wajsberg hoop can be seen as (the order dual of) an ideal of a suitable MV-algebra.

\section{Preliminaries}
In this section, we recall some standard categorical and algebraic notions and we fix the related notation.  For the sake of the reader, we refer to \cite{Handbook2,BB} where all these notions can be found, rather than pointing to the original papers where the concepts were introduced. 

Let $\sB$ be a category. We denote by $\Pt_\sB$ the category whose objects (called \emph{points}) are the split epimorphisms of $\sB$, together with a chosen section, i.e.\ the four-tuples $(B,J,p,s)$ in $\sB$ such that $p\colon B\to J$ and $p\circ s=id_J$. The morphisms of $\Pt_\sB$ are pairs of arrows $(f,g)\colon (B',J',p',s')\to (B,J,p,s)$ such that both the upward and downward directed squares below commute.
\begin{equation}\label{diag:split_epi_morphism}
\begin{aligned}\xymatrix{
B'\ar@<-.5ex>[d]_{p'}\ar[r]^{f}
&B\ar@<-.5ex>[d]_{p}
\\
J'\ar@<-.5ex>[u]_{s'}\ar[r]_g
&J\ar@<-.5ex>[u]_{s}}
\end{aligned}
\end{equation}   
Given an object $J$ of $\sB$, we denote by $\Pt_\sB(J)$  the subcategory of $\Pt_\sB$ obtained by fixing $J'=J$ and $g=id_J$ in diagram \eqref{diag:split_epi_morphism}. This is called \emph{the category of points over $J$}. In order to simplify the notation, a morphism between two points $(f,id_J)\colon (B',J,p',s')\to (B,J,p,s)$ will usually be written $f\colon (p',s')\to (p,s)$.

If the category $\sB$ has pullbacks, any arrow $g\colon J'\to J$ determines a functor $g^*\colon \Pt_{\sB}(J)\to  \Pt_{\sB}(J')$ defined by pulling back along $g$ the objects of $\Pt_{\sB}(J)$, as shown in the pullback diagram below:
\begin{equation}\label{diag:change_of_base}
\begin{aligned}\xymatrix{
J'\times_{J}B \ar@<-.5ex>[d]_{\pi_1}\ar[r]^-{\pi_2}
&B\ar@<-.5ex>[d]_{p}
\\
J'\ar@<-.5ex>[u]_{s'}\ar[r]_g
&J\ar@<-.5ex>[u]_{s}}
\end{aligned}
\end{equation}
Here $s'$ is the unique section of the first pullback projection $\pi_1$ obtained by the universal property of the pullback, i.e.\ it is such that $\pi_1\circ s'=id_{J'}$ and $\pi_2\circ s'=s\circ g$. The category $\sB$ is \emph{protomodular} \cite{B91} if, for every arrow $g$, the functor $g^*$ defined above is conservative, i.e.\ it reflects isomorphisms. 

A category is \emph{pointed} if it has a zero-object, that is, an object $0$ that is both terminal and initial.  In a pointed category, a \emph{kernel} of a map $p\colon B\to J$ is the pullback of $p$  along the initial map $0\to J$. More generally, a \emph{kernel pair} of $p$ is the pullback of $p$ with itself. An (internal) equivalence relation is called \emph{effective} when it is the kernel pair of a morphism. A category $\sB$ is \emph{regular}, if it is finitely complete, has pullback-stable regular epimorphisms, and all effective equivalence relations admit coequalizers. A regular category is \emph{Barr-exact} \cite{Barr} when all equivalence relations are effective. 
A pointed regular protomodular category is called \emph{homological}; it is called \emph{semi-abelian} \cite{JMT02} if it is also Barr exact and has finite coproducts. The notion of protomodular (and semi-abelian) category encompasses a wide range of categories of interest to algebraists, including categories of groups, rings, Lie algebras, associative algebras, and cocommutative Hopf algebras, among others. Of course, all abelian categories are semi-abelian. 

Finally, recall that, if a finitely complete category is pointed, protomodularity is equivalent to the so-called \emph{split short five lemma}. For the reader's convenience, here we recall a version of it. Let $(p,s)$ and $(p',s')$ be two split epimorphisms  over $J$, with kernels $(K,k)$ and $(K,k')$ respectively, 
\[
\xymatrix@C=10ex{
&B\ar@<+.5ex>[dr]^p\ar[dd]^\phi
\\
K\ar[ur]^k\ar[dr]_{k'}&&J\ar@<+.5ex>[ul]^s\ar@<+.5ex>[dl]^{s'}
\\
&B'\ar@<+.5ex>[ur]^{p'}}
\]
if  $\phi$ is a morphism between them, i.e.\ it is such that $p'\circ \phi=p$, $\phi\circ s=s'$ and $\phi\circ k =k'$, then $\phi$ is an isomorphism.

\section{Relative \texorpdfstring{$U$}{U}-ideals}\label{sec:U-ideals}
In this section we provide a definition that comprises the leading example in the Introduction and several more.
\subsection{Basic setting for relative \texorpdfstring{$U$}{U}-ideals}
Let $\sA$ be a pointed category with pullbacks and $U\colon\sB \to \sA$ be a faithful functor.  \begin{definition}\label{d:ideal}
A \emph{relative  $U$-ideal} of an object $B$ of $\sB$ (or $U$-ideal, or simply ideal, when $U$ is understood) is a morphism $k\colon A\to U(B)$  of $\sA$ such that there exists a morphism $f\colon B\to B'$ of $\sB$ that makes the following diagram a pullback in $\sA$:
\[
\xymatrix{
A\ar[r]^-k\ar[d]&U(B)\ar[d]^{U(f)}\\
0\ar[r]&U(B')
}
\]
\end{definition}
In other words, a $U$-ideal is a kernel in $\sA$ of a map that lives in $\sB$. Since kernels are monomorphisms, $U$-ideals can be seen as subobjects in a ``larger'' category; when the monomorphism $k$ is understood, we feel free to say that the object $A$ is a $U$-ideal of $B$. Prototypical examples of $U$-ideals come from the inclusion $U\colon \Ring\to\Rng$ of unital rings into rings: $U$-ideals in $\Ring$ are nothing but the usual bilateral ideals of ring theory. Notice that in concrete situations, namely, when $U$ is a forgetful functor between concrete categories, we identify the subobject $A$ of $U(B)$ with its image, namely a subset $A \subseteq B$.

The example from ring theory suggests considering a more robust environment for dealing with the notion of $U$-ideal. 
\begin{definition}\label{def:basic}
A \emph{basic setting} for relative $U$-ideals is an adjunction 
\begin{equation}\label{diag:adjunction}
\xymatrix{\sB\ar[r]_-{U}&\sA\ar@/_3ex/[l]_{F}^{\bot}},	
\end{equation}
 where the category $\sA$ is homological  and $U$ is a conservative faithful functor.  
\end{definition}
\begin{remark}
The right adjoint $U$  preserves finite limits and is conservative; therefore, it reflects protomodularity (see \cite[Example 3.1.9]{BB}); it follows that the category $\sB$ is protomodular, too.  On the other hand, the left adjoint  $F$ preserves finite colimits, so the object $I\df F(0)$ is initial in $\sB$.
 \end{remark}
\begin{remark}\label{rem:1}
	It is worth to observe that, most often, the requirement for $U$ to be faithful comes for free. More precisely, for a functor $U$ that preserves finite limits between finitely complete categories, it is easy to show that ``$U$ conservative'' implies  ``$U$ faithful''. 
\end{remark}

\subsection{Relative \texorpdfstring{$U$}{U}-ideals and varieties of algebras}
Let us recall the characterization of protomodular varieties, as  established by Bourn and Janelidze in \cite{BJ}.
\begin{theorem}\label{p:protomodular}
A variety $\sV$ of universal algebras is protomodular if and only if there is a natural number $n$, $0$-ary terms $e_1,\dots,e_n$, binary terms $\alpha_1,\dots,\alpha_n$, and $(n+1)$-ary term $\theta$ such that:
\begin{equation}
\begin{aligned}\label{eq:BJ}
	\sV&\models \theta(\alpha_1(x,y),\dots,\alpha_n(x,y),y)= x & \text{and}\\ 
	\sV&\models\alpha_i(x,x)= e_i & \text{for}\ i=1,\dots,n\,.
\end{aligned}
\end{equation}
\end{theorem}

\begin{remark}\label{r:proto-zero}
Since any variety is co-complete and Barr-exact, if $\sV$ is protomodular and pointed then it is semi-abelian.  Notice that being pointed is equivalent to having a unique definable constant $0$ \cite[Proposition 0.2.6]{BB}, so, in the above characterization, one has $n> 0$ and necessarily $e_1=\dots=e_n=0$. 
\end{remark}

 \subsection{Basic setting for varieties}  Let $\sA$ (resp.\ $\sB$) be the variety of universal algebras with signature $\Sigma_\sA$ (resp.\ $\Sigma_\sB$) defined by a set $Z_\sA$ (resp.\   $Z_\sB$) of equations. Moreover, we require $\sA$ to be homological (thus  semi-abelian),  $\Sigma_\sA\subseteq\Sigma_\sB$ and $Z_\sA\subseteq Z_\sB$.  The obvious forgetful functor $U\colon \sB\to \sA$ is an algebraic functor between the categories of set-theoretical models of two algebraic theories. 
We call a functor $U$ as above a \emph{basic setting for varieties}. Notice that, according to \cite[Proposition 3.5.7 and Theorem 3.7.7]{Handbook2}, $U$ is a faithful conservative right adjoint, thus satisfies the conditions of Definition \ref{def:basic}. 
\begin{remark}
	 Janelidze recently introduced the novel notion of {\em ideally exact category}, as a first step towards ``a development of a new non-pointed counterpart of semi-abelian categorical algebra'' (\cite{J23}). It is immediate to see that, in our \emph{basic setting for varieties}, the category $\sB$ is ideally exact (see \cite[Theorem~3.1, Definition~3.2]{J23}).
\end{remark}

We now recall the following definitions introduced by Ursini in \cite{U} (see also \cite{GU}).
 \begin{definition}
Let $\sV$ be a variety with a constant symbol $0$ in its signature $\Sigma_\sV$.
 \begin{itemize}
 	\item A term $t(x_1,\dots,x_m,y_1,\dots,y_n)$ over $\Sigma_V$ is called $0$-\emph{ideal term} in the variables $y_1,\dots,y_n$ if 
 \[
 \sV \models t(x_1,\dots,x_m,0,\dots,0)=0\,.
 \]
 	\item A non-empty subset $H$ of an algebra $A$ in $\sV$ is called  $0$-\emph{ideal} if, for every 0-ideal term  $t(x_1,\dots,x_m,y_1,\dots,y_n)$ in the variables $y_1,\dots,y_n$, any $a_1,\dots,a_m\in A$, and any $h_1,\dots,h_n\in H$, one has 
 \[
 t(a_1,\dots,a_m,h_1,\dots,h_n)\in H\,.
 \]
\end{itemize}
 \end{definition}
It is easy to see that the equivalence class of $0$ under any given congruence is a  0-ideal.  Vice versa, one calls $0$-\emph{ideal determined}, or just \emph{ideal determined}  (cf.\ \cite[Definition 1.3]{GU}) a  variety $\sV$ where every $0$-ideal is the equivalence class of $0$ for a unique congruence relation. A relevant subclass of the class of ideal determined varieties is the class of the so-called \emph{classically ideal determined} \cite{U3}. A variety $\sV$ is named so, when it satisfies the conditions of Theorem \ref{p:protomodular},  with all $e_i$'s equal to a single constant $0\in \Sigma_\sV$. By Remark \ref{r:proto-zero}, it is immediately noticed that all semiabelian varieties are classically ideal determined. Classically ideal determined varieties were introduced in \cite{U2} with the name \emph{BIT speciale} varieties, whereas ideal determined ones where called \emph{BIT} varieties in \cite{U}. The following result needs no explanation.
\begin{lemma}
	Let  $U\colon \sB\to \sA$ be a basic setting for varieties. Then $\sB$ is classically ideal determined.
\end{lemma}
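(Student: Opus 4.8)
The plan is to transport, along the inclusion $\Sigma_\sA\subseteq\Sigma_\sB$, the Bourn--Janelidze witnessing terms that $\sA$ already carries, and to conclude via \Cref{p:protomodular} and \Cref{r:proto-zero}. First I would use that $\sA$, being a homological variety, is semi-abelian: a variety is automatically Barr-exact and cocomplete, so ``pointed regular protomodular'' upgrades to ``semi-abelian'', exactly as already observed for basic settings for varieties. By \Cref{r:proto-zero}, $\sA$ is therefore classically ideal determined, so \Cref{p:protomodular} furnishes a natural number $n>0$, binary terms $\alpha_1,\dots,\alpha_n$ and an $(n+1)$-ary term $\theta$ over $\Sigma_\sA$, all sharing the single constant $0\in\Sigma_\sA$, with $\sA\models\theta(\alpha_1(x,y),\dots,\alpha_n(x,y),y)=x$ and $\sA\models\alpha_i(x,x)=0$ for every $i$.

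Next I would check that these very data witness the conditions of \Cref{p:protomodular} for $\sB$, with $e_1=\dots=e_n=0\in\Sigma_\sB$. Since $\Sigma_\sA\subseteq\Sigma_\sB$, the $\alpha_i$, the term $\theta$ and the constant $0$ live in $\Sigma_\sB$ as well, so the only thing to verify is that the two displayed $\Sigma_\sA$-identities hold in every algebra of $\sB$. This is where $Z_\sA\subseteq Z_\sB$ enters: for an algebra $M$ in $\sB$, its $\Sigma_\sA$-reduct $U(M)$ satisfies $Z_\sA$, hence lies in $\sA$ and thus satisfies both identities; as these identities involve no operation symbol outside $\Sigma_\sA$, the algebra $M$ satisfies them too. (Equivalently: by completeness of equational logic the identities are consequences of $Z_\sA$, hence of $Z_\sB$, hence valid in $\sB$.) By the very definition of classically ideal determined, this is all that is needed.

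There is essentially no obstacle here --- which is why the statement can be left as a one-line remark. The only point that deserves a moment's care is the equational transfer above, i.e.\ making explicit that validity of a $\Sigma_\sA$-identity passes from $\sA$ to $\sB$ precisely because $Z_\sA\subseteq Z_\sB$ and the identity mentions no new operation symbols; protomodularity of $\sB$ then comes for free from \Cref{p:protomodular}, rather than from separately invoking that the right adjoint $U$ reflects protomodularity.
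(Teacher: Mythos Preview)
Your argument is correct and is exactly the unpacking the authors had in mind: the paper offers no proof at all, merely stating that the result ``needs no explanations''. Your transport of the Bourn--Janelidze terms along $\Sigma_\sA\subseteq\Sigma_\sB$ and $Z_\sA\subseteq Z_\sB$ is the intended one-line reason.
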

The next result establishes a connection between the varietal notion of $0$-ideal and the categorical notion of $U$-ideal.
\begin{theorem}\label{p:U-ideal=0-ideal}
	Let  $U\colon \sB\to \sA$ be a basic setting for varieties.  A subset $H$ of an algebra $B$ of $\sB$ is a $0$-ideal of $B$ if and only if $H\subseteq  U(B)$ is a $U$-ideal of $B$ with respect to $U\colon \sB\to \sA$.
\end{theorem}
 \begin{proof}
Let $H$ be a $0$-ideal of the algebra $B$ of $\sB$. All the $0$-ideal terms of the variety $\sA$ are still $0$-ideal terms when considered in the variety $\sB$. Therefore, $H\subseteq U(B)$ is a $0$-ideal in $\sA$. On the other hand, by \cite[Equation 8.5]{JMU07} (see also \cite{MM10}), in any semi-abelian variety, the $0$-ideals coincide with kernels, thus $H$ is a kernel in $\sA$. Now we claim that it is a kernel of a homomorphism $U(f)$, with $f$ in $\sB$. Indeed, since $\sB$ is ideal determined, there exists a unique congruence $R$ on $B$ such that $H$ is the $0$-class of $R$. Let $f\colon B\to Q$ be the canonical quotient over $R$. By \cite[Proposition 3.7.5]{Handbook2}, $U$ preserves (and reflects) coequalizers of equivalence relations, thus $U(f)$ is the canonical quotient over $U(R)$   in $\sA$. It is easy to see that $H\subseteq U(B)$ is then the $0$-class of $U(R)$, thus its normalization. In conclusion, $H\subseteq U(B)$ is the kernel of $U(f)$, i.e.\ $H$ is a $U$-ideal.
 
 Vice versa, let $H\subseteq B$ be a $U$-ideal of $B$. This means that $H\subset U(B)$ is the kernel in $\sA$ of a morphism $U(f)$, i.e.\ it is the $0$-class of the kernel pair relation of $U(f)$. Now, again by  \cite[Proposition 3.7.5]{Handbook2}, $U$ preserves and reflects small limits, as a consequence the kernel pair of $U(f)$ is the image under $U$ of the kernel pair of $f$. Since $U$ is constant on the underlying sets, $H$ is the $0$-class of the corresponding congruence in $\sB$, hence it is a $0$-ideal of $B$.
 \end{proof}

\subsection{Augmentation \texorpdfstring{$U$}{U}-ideals}
 Let us come back to a more general (not necessarily varietal) situation and consider a basic setting as in \eqref{diag:adjunction}.  For every object $A$ of $\sA$, let $p_A$ be the  morphism of $\sB$  given by the universal property of the unit $\eta$ of the adjunction,  with respect to the zero morphism $0\colon A\to U(I) $. Namely, the unique map $p_A\colon F(A)\to I$ such that $U(p_A)$ makes the  triangle below commute:
\begin{equation}\label{diag:unit}
\begin{aligned}
\xymatrix{F(A)\ar[d]_{\exists! p_A}^{\ \ \ \text{such that} }& A\ar[r]^-{\eta_A}\ar[dr]_-{0}&UF(A)\ar@{-->}[d]^{U(p_A)}\\I&&U(I)	}
\end{aligned}
\end{equation}
\begin{definition}\label{d:augmentation}
	We say that $\eta_A$  is an \emph{augmentation $U$-ideal}, or more simply an \emph{augmentation ideal}, if it is the kernel of $U(p_A)$. \end{definition}
Notice that the proposed terminology is consistent, since any augmentation $U$-ideal is a $U$-ideal of $F(A)$ according to Definition \ref{d:ideal}.

\begin{remark}\label{rem:2}
Recall from \cite[Proposition 2.1]{T91} that the unit $\eta$ of an adjunction is called \emph{cartesian}  when the squares expressing its naturality are pullbacks. For a basic setting like in \eqref{diag:adjunction}, there is an unexpected connection between the notion of augmentation ideal and that of  cartesian unit: the unit $\eta$ of the adjunction is cartesian if and only if 
\begin{equation}\tag{$*$}\label{condition}
\text{for every}\ A\ \text{in}\ \sA,\ \eta_A\ \text{is an augmentation ideal.} 
\end{equation}
\end{remark}

\begin{theorem}\label{thm:main} 
Suppose that in a basic setting like in \eqref{diag:adjunction}  condition {\em (\ref{condition}) } holds. Then, the kernel functor 
	\[
	K\colon \sB/I\to \sA
	\]
	defined on objects by letting $K(f)\df \Ker (U(f))$,
	is an equivalence of categories.  (Notice that the kernel is computed in $\sA$.)
\end{theorem}
\begin{proof}
We observe that, since $I$ is initial in $\sB$, every object in $\sB/I$ is a split epimorphism whose unique section is given by the initial arrow. Thus, the slice category $\sB/I$ is the same as the category $\Pt_{\sB}(I)$ of points over $I$, and one can consider the functor $K\colon \Pt_{\sB}(I)\to \sA$, defined on objects by 
\[
K(f,s)\df \Ker (U(f))\,.
\]

\underline{$K$ is essentially surjective.} Indeed, given any object $A$ of $\sA$, let us consider the point $(p_A,\sigma_A)$, where $p_A$ is the map of diagram \eqref{diag:unit} and $\sigma_A$ its unique section.  $K(p_A,\sigma_A)$ is a kernel of $U(p_A)$, moreover, by condition \eqref{condition}, also $A$ is a kernel of $U(p_A)$, therefore  the two are isomorphic to each other.

\underline{$K$ is faithful.} Indeed, let us consider two morphisms $\phi,\phi'\colon (f,s)\to (f',s')$ between points over $I$, and suppose $K(\phi)=\psi=K(\phi')$. The situation is represented by the diagram below:
\[
\xymatrix@C=10ex{
\ar[dd]_{\psi}
\Ker(f)\ar[r]^-i&U(B)\ar@<+.5ex>[drr]^-{U(f)}\ar@<-.5ex>[dd]_{U(\phi)}\ar@<+.5ex>[dd]^{U(\phi')}
\\
&&&U(I)\ar@<+.5ex>[ull]^{U(s)}\ar@<+.5ex>[dll]^{U(s')}
\\
\Ker(f')\ar[r]_-{i'}&U(B')\ar@<+.5ex>[urr]^-{U(f')}
}
\]
Recall that, since $\sA$ is homological and hence protomodular, by \cite[Lemma 3.1.22]{BB} the pair $(i,U(s))$ is jointly strongly epic. We prove that $U(\phi)$ and $U(\phi')$ are equal by precomposing them with the pair $(i,U(s))$. Indeed, by hypothesis, 
\[
U(\phi)\circ i=i'\circ\psi=U(\phi')\circ i\,.
\]
Furthermore, since $I$ is initial,
\[
\phi \circ s=s'= \phi'\circ s\,,
\]
and therefore 
\[
U(\phi) \circ U(s)=U(\phi') \circ U(s)\,.
\]
Hence $U(\phi)=U(\phi')$, and by the faithfulness of $U$, we conclude $\phi=\phi'$.

\underline{$K$ is full.} We have to prove that, given two points $(f,s)$ and $(f',s')$ as above, and a morphism $\psi\colon K(f,s)\to K(f',s')$, one can find a morphism of points $\phi\colon (f,s)\to(f',s')$ such that $K(\phi)=\psi$.
To this end, let us consider the following construction:
\[
\xymatrix@C=10ex{
\ar[dd]_{\psi}\ar[dr]_{\eta_{\Ker(f)}}
\Ker(f)\ar[r]^-i&U(B)\ar@<+1ex>[drr]^-{U(f)}\\
&UF(\Ker(f))\ar@{-->}[u]^{U(\tau)}\ar@{-->}[d]_{U(\gamma)}
\ar@<+.5ex>[rr]^{U(p_{\Ker(f)})}
&&U(I)\ar@<+0ex>[ull]^(.7){U(s)}\ar@<+1ex>[dll]^{U(s')}\ar@<+.5ex>[ll]^{U(\sigma_{\Ker(f)})}
\\
\Ker(f')\ar[r]_-{i'}&U(B)'\ar@<+0ex>[urr]^(.3){U(f')}
}
\] 
By the universal property of the unit $\eta_{\Ker(f)}$, we obtain the unique morphism $\tau\colon F(\Ker(f)) \to B $ such that $U(\tau) \circ \eta_{\Ker(f)} =i$. In fact, one also has $f\circ \tau=p_{\Ker(f)}$, since, by the universal property of the unit, there exists a unique arrow $F(\Ker(f))\to I$ whose image under $U$, precomposed with $\eta_{\Ker(f)}$, equals $0$.
 Notice that, by the short five lemma, $U(\tau)$ is an isomorphism. Since $U$ is conservative, $\tau$ is an isomorphism too.
Similarly, by the universal property of the unit, there  exists a unique morphism  $\gamma\colon  F(\Ker(f))\to B'$ such that $U(\gamma)\circ \eta_{\Ker(f)}=i' \circ \psi$. Moreover, repeating the same argument as above, one easily sees that $\gamma$ gives a morphism of points $(p_{\Ker(f)},\sigma_{\Ker(f)})\to(f',s')$ over $I$. Let us define $\phi=\gamma\circ \tau^{-1}$. It is immediate to notice that $\phi$ yields a morphism of points over $I$, as required. Moreover,  one readily checks that $\gamma\circ \tau^{-1}\circ i = \gamma\circ \eta_{\Ker(f)}=i'\circ \psi$, so  that $K(\phi)=\psi$.
\end{proof}

\begin{remark}\label{rem:3}
	The proof of Theorem \ref{thm:main} can also be obtained by a less explicit and more general argument, as explained by G.\ Janelidze in \cite[Remark 2.7(b)]{J23}. 
\end{remark}

\begin{example}[Unitalization of associative algebras]
Let us fix a commutative unital ring $R$ and consider the forgetful functor
\[
U\colon \UAss\to \Ass
\]
from the category of unital associative $R$-algebras to the semi-abelian category of associative $R$-algebras. The functor $U$ has a right adjoint $F$ which freely adds the multiplicative identity. Given an associative algebra $A$, the unital associative algebra $F(A)$  has an underlying abelian group given by the direct product $A\times R$
endowed with a multiplication defined by
\begin{equation}\label{eq:multiplication2}
(a,r)(a',r')\df (r'a+ra'+aa',rr'),
\end{equation}
a scalar multiplication
\begin{equation}\label{eq:multiplication3}
r(a,r')\df (ra,rr'),
\end{equation}
and a multiplicative identity  $(0,1)$. The ring $R$ is the initial algebra in $\UAss$.  Since condition \eqref{condition} holds, by Theorem \ref{thm:main}, we get an equivalence of categories $\UAss/R\simeq \Ass$, it follows that the $U$-ideals here are precisely the bilateral ideals of unital associative algebras. 
\end{example}
\begin{remark}
The functor $F$ is not conservative, in general. A counterexample in the case of associative algebras is given in \cite{A}.
\end{remark}
\begin{example}[Unitalization of $C^*$-algebras] Let $\CStar$ denote the category of non-unital $C^*$-algebras, with ${}^*$-homomorphisms as arrows. $\CStar$ is a semi-abelian category (see \cite{GR}), moreover, if $\UCStar$ denotes the category of unital $C^*$-algebras, the forgetful functor $U\colon \UCStar\to \CStar$ admits a faithful conservative left adjoint $F$. For a $C^*$-algebra $A$, its unitalization $F(A)$ underlies the complex vector space $A\oplus\mathbb{C}$, with multiplication as in \eqref{eq:multiplication2}, unit $(0,1)$ and involution
\[
(a,z)^*\df (a^*,\overline z)
\]
where $\overline  z$ is the complex conjugate of $z$.  Given that condition \eqref{condition} holds, Theorem \ref{thm:main}  establishes an equivalence of categories $\UCStar/\mathbb C\simeq\CStar$.
\end{example}
\begin{example}
	Recall that a Boolean ring is a commutative ring $R$ where all elements are idempotent. Boolean algebras can be seen as Boolean rings with unit, with the operations translating in this way:
\[
x\vee y =x+xy+y,\quad x\wedge y=xy,\quad x+y=(x\wedge\neg y)\vee (\neg x\wedge y)\,.
\]
Let us denote by $U\colon \BooAlg\to \BooRng$  the forgetful  functor from Boolean algebras to Boolean rings, which  is nothing but the restriction to $\BooAlg$ of the functor $U\colon \CRing\to \CRng$ recalled in the introduction. In fact, the left adjoint restricts, too. Since the category of Boolean rings is semi-abelian and condition \eqref{condition} holds, Theorem \ref{thm:main} applies and provides the equivalence of categories  $\BooAlg/B_2\simeq\BooRng$, where $B_2=\{0,1\}$ is the initial Boolean algebra. 

The next section is devoted to the study of a similar situation, stemming from algebraic logic, that involves MV-algebras instead of Boolean algebras.
\end{example}

\section{Hoops and basic algebras}\label{sec:MV}
In this section, we prove that the variety of hoops is semi-abelian. From this fact, we immediately deduce that many varieties of algebras used in substructural logic are protomodular. In \cite{PtJ} Johnstone proves that the variety $\HSLat$ of Heyting semilattices is semi-abelian and, in turns, that the variety $\HAlg$  of Heyting algebras is protomodular. His argument uses the characterization of protomodular varieties by Bourn and Janelidze (as recalled in Theorem \ref{p:protomodular}). Our proof of the fact that hoops are semi-abelian relies on the same terms as those used in \cite{PtJ}, with only one of them adapted to our more general framework. Notice that our result is not a special case of Johnstone's; on the contrary, the latter can be deduced from our Theorem \ref{prop:Hoops_is_semi-abelian}. 

At the end of the section, we apply Theorem \ref{thm:main} to the forgetful functor from MV-algebras to Wajsberg hoops. 
We refer the interested reader to \cite{HB,BF00,BP94,CDM} for the missing notions on hoops and basic algebras.

\subsection{Hoops are semi-abelian}

A \emph{hoop} is an algebra $(A;\cdot,\to,1)$ such that $(A;\cdot,1)$ is a commutative monoid and the following equations hold:
\begin{enumerate}[(H1)]
\item\label{d:hoops:item2} $x\to x=1$\,;
\item\label{d:hoops:item3} $x\cdot(x\to y) =y\cdot(y\to x)$\,;
\item\label{d:hoops:item4} $(x\cdot y)\to z =x\to(y\to z)$\,.
\end{enumerate}
Obviously, the class of hoops forms an algebraic variety (see \cite[Theorem 1.8]{BP94}), which we denote by $\Hoops$.
In the following, we adopt the convention that the monoid operation $\cdot$ is more binding than the residual operation $\to$. Any hoop is partially ordered by the relation $\leq$ (often referred to as its \emph{natural order}), defined by
\begin{equation}\label{eq:hoops_order}
x\leq y \quad \text{iff}\quad x\to y=1 \quad \text{iff}\quad \exists u(x=u\cdot y)\,.
\end{equation}
In fact, the rightmost condition above describes the inverse right-divisibility relation of the monoid $(A;\cdot,1)$, where $u$ is the residual $y\to x$. Furthermore, with respect to this partial order, hoops are $\wedge$-semilattices and the meet is term-definable as \begin{equation}\label{eq:D}
\tag{D} x\wedge y\df x\cdot (x\to y).
\end{equation}
Hoops with their homomorphisms form a category which we denote by $\Hoops$.  

Given a hoop $(A;\cdot,\to,1)$,  a subset $F$ of $A$ is called an \emph{implicative filter}, or simply a \emph{filter},  if $1\in F$ and whenever $x\in F$ and $x\to y\in F$, then also $y\in F$. 
\begin{lemma}\textup{\mbox{\cite[Theorem 1.6]{BP94}}}
A subset $F\subseteq A$ is a filter on $A$ if and only if $(F;\cdot, 1)$ is a submonoid of $(A;\cdot, 1)$ which is upward closed with respect to the natural order $\leq$ of the hoop. 
\end{lemma}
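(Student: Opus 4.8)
The plan is to prove the two implications separately, using throughout the two equivalent forms of the natural order recorded in \eqref{eq:hoops_order} together with the hoop axioms (H1)--(H3); reflexivity of $\preceq$ is exactly (H1), and I take for granted, as stated above, that $\preceq$ is a partial order.

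For the ``only if'' direction I would assume that $F$ is a filter. Then $1\in F$ by definition, so to establish that $(F;\cdot,1)$ is a submonoid it suffices to check closure under $\cdot$. Given $x,y\in F$, axioms (H3) and (H1) yield $x\to\big(y\to(x\cdot y)\big)=(x\cdot y)\to(x\cdot y)=1$, which lies in $F$; applying the defining implication-closure of a filter first to the pair $x,\,x\to\big(y\to(x\cdot y)\big)$ and then to the pair $y,\,y\to(x\cdot y)$ gives successively $y\to(x\cdot y)\in F$ and $x\cdot y\in F$. Upward closure is then immediate: if $x\in F$ and $x\preceq y$, then $x\to y=1\in F$, and the filter condition applied to $x$ gives $y\in F$.

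For the ``if'' direction I would assume that $(F;\cdot,1)$ is a submonoid of $(A;\cdot,1)$ which is upward closed with respect to $\preceq$. Being a submonoid, $F$ contains $1$. Suppose $x\in F$ and $x\to y\in F$; since $F$ is closed under $\cdot$, we get $x\cdot(x\to y)\in F$. Now (H2) together with commutativity of $\cdot$ gives $x\cdot(x\to y)=(y\to x)\cdot y$, so by the right-divisibility form of \eqref{eq:hoops_order} we have $x\cdot(x\to y)\preceq y$; upward closure of $F$ then forces $y\in F$. Hence $F$ is a filter.

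The argument is just a short chain of manipulations with the axioms, so I do not expect a serious obstacle. The only points requiring a little ingenuity are spotting that (H3) and (H1) combine into the ``deduction-like'' identity $x\to\big(y\to(x\cdot y)\big)=1$ used in the first direction, and that (H2) exhibits $x\cdot(x\to y)$ as a right multiple of $y$, i.e.\ $x\cdot(x\to y)\preceq y$, which drives the second direction.
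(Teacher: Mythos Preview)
Your argument is correct. Note, however, that the paper does not supply its own proof of this lemma: it is merely quoted from \cite[Theorem~1.6]{BP94}, so there is nothing in the paper to compare against. Your proof is the standard one and matches in spirit what one finds in the literature; the two key observations you isolate --- that (H\ref{d:hoops:item4}) and (H\ref{d:hoops:item2}) give $x\to(y\to(x\cdot y))=1$, and that (H\ref{d:hoops:item3}) exhibits $x\cdot(x\to y)$ as a $\cdot$-multiple of $y$ --- are exactly the hinges of the usual argument.
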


In $\Hoops$, filters determine congruences and vice versa (see \cite[Theorem 1.11]{BP94}); in fact, the formers coincide with kernels of homomorphisms and are also subhoops.

\begin{theorem}\label{prop:Hoops_is_semi-abelian}
The variety $\Hoops$ is semi-abelian.
\end{theorem}

\begin{proof}
Since $\Hoops$ is a variety of algebras, it has finite coproducts and is Barr-exact. Furthermore, $\Hoops$ is pointed, as the one-element hoop is both initial and terminal.   To conclude, we prove that $\Hoops$ is protomodular.  Define the terms:
\[e_1\df 1\,,\qquad e_2\df 1\,, \qquad \alpha_1(x,y)\df  (x\to y)\,,\]
\[\qquad \alpha_2(x,y)\df (((x\to y)\to y)\to x)\,,\qquad\theta(x,y,z)\df ((x\to z)\cdot y)\,.\]
Now, $\alpha_1(x,x)= x\to  x=1$ by axiom \ref{d:hoops:item2};  similarly, 
\[\alpha_2(x,x)=((x\to x)\to x)\to x = (1\to x)\to x= x\to x =1,\]
where $1\to x=x$ by \cite[Lemma 1.5]{BP94}.  Finally,
\begin{align*}
\theta(\alpha_1(x,y),\alpha_2(x,y),y)&=  ((x\to y)\to y)\cdot (((x\to y)\to y)\to x) &
\\
&= x\cdot (x\to ((x\to y)\to y))&  \text{by \ref{d:hoops:item3}} 
\\
&= x\cdot (x\cdot (x\to y)\to y) & \text{by \ref{d:hoops:item4}} 
\\
&= x\cdot ((x\to y)\to(x\to y))& \text{by \ref{d:hoops:item4}}
\\
&= x\cdot 1 =x & \text{by \ref{d:hoops:item2}}
\end{align*}  
Thus, Theorem \ref{p:protomodular} applies.
\end{proof}
Notice that, as remarked in \cite{BJ}, $p(x,y,z)\df \theta(\alpha_1(x,y),\alpha_2(x,y),z)$ is a Mal'tsev term. Indeed, $p(x,y,y)=x$ holds by the above computations. Moreover:
\begin{align*}
p(x,x,y)=  (\alpha_1(x,x)\to  y)\cdot \alpha_2(x,x)=(1\to  y)\cdot1=y.
\end{align*}  
Thus, as any semi-abelian variety, $\Hoops$ is also a Mal'tsev variety.

We briefly recall below the definitions of some subvarieties of hoops well-known for their role in the algebraic study of logic. The equations on the left define the subvarieties of hoops indicated on the right, respectively. Below we use $x\vee y$ as an abbreviation for $((x\to y)\to y) \wedge ((y\to x)\to x)$.
\begin{align}
	\tag{W}\label{eq:W} &(x\to y)\to y=(y\to x)\to x & \text{\emph{Wajsberg hoops} (\WHoops)}\\
	\tag{S}\label{eq:S} &(x\to y)\vee (y\to x)=1	 & \text{\emph{Basic hoops} (\BHoops)}\\
	\tag{I}\label{eq:I} &x\cdot x=x 	 & \text{\emph{Idempotent hoops} (\IHoops)}\\
	\notag 		 &\eqref{eq:S} + \eqref{eq:I} 	 & \text{\emph{G\"odel hoops} (\GHoops)}\\
	\notag 		&\eqref{eq:S} +  (x\to z)\vee ((y\to x\cdot y)\to x)=1 	 & \text{\emph{Product hoops} (\PHoops)}.
\end{align}

Idempotent hoops are term equivalent to \emph{Brouwerian semilattices} (also known as \emph{Heyting semilattices}, i.e.\ the $\{\wedge,\to,1\}$-subreducts of Heyting algebras) by defining the hoop operation as $x\cdot y\df x\wedge y$ in any Brouwerian semilattice (see also \cite[Theorem 1.22]{BP94}).  
If the theory of Basic hoops is expanded by adding a constant $0$ and an axiom stating that $0$ is the minimum of the algebra, one obtains \emph{Basic algebras}.  The same expansion applied to Wajsberg, Product and G\"odel hoops, yields the varieties of \emph{Wajsberg}, \emph{Product} and \emph{G\"odel algebras}. Algebras obtained through the expansion of idempotent hoops are term equivalent to Heyting algebras (see \cite[Section 4]{AFM}).

Thus, Theorem \ref{prop:Hoops_is_semi-abelian} has two immediate corollaries. The first, to our knowledge, is new; the second is Johnstone's result in \cite{PtJ}.
\begin{corollary}\label{c:WH-semi-abelian}
The varieties  of Basic,  Wajsberg,  Product and G\"{o}del algebras are protomodular and the varieties of Basic,  Wajsberg,  Product and G\"{o}del hoops are semi-abelian.
\end{corollary}
\begin{corollary}
The variety $\HSLat$ of Heyting semilattices is semi-abelian, and the variety $\mathsf{IAlg}$ of Heyting algebras is protomodular.
\end{corollary}

Summing up, the algebraic categories $\GHoops$, $\PHoops$, $\BHoops$ and $\WHoops$ are semi-abelian by Corollary \ref{c:WH-semi-abelian}, thus homological. Clearly, forgetful functors $U\colon \mathsf{XAlg}\to \mathsf{XHoops}$ that forget the constant 0 are algebraic for any $\mathsf{X}\in \{\mathsf{G},\mathsf{P},\mathsf{W}, \mathsf{B}, \mathsf{I}\}$, and finally the two-element algebra $\two=\{0,1\}$ is initial in each of the above-mentioned categories of algebras. This provides several instances of the basic setting of diagram  \eqref{diag:adjunction}. In the next section, we explore the case of Wajsberg algebras.
Although we do not explore in detail the other cases apart from MV-algebras, we mention that the recent results in \cite{GMU} on the algebraic closure in Product and G\"{o}del hoops hint at some connection with our setting.

\subsection{Wajsberg hoops and MV-algebras}
\begin{definition}\label{d:MV-algebras}
An \emph{MV-algebra} is an algebra $(A;\oplus,\neg,0)$ that satisfies:
\begin{enumerate}
\item\label{d:MV-algebras:item-2} $x\oplus (y\oplus z)=(x\oplus y)\oplus z$\,;
\item\label{d:MV-algebras:item-1} $x\oplus y=y\oplus x$\,;
\item\label{d:MV-algebras:item0} $x\oplus 0=x$\,;
\item\label{d:MV-algebras:item1} $\neg\neg x=x$\,;
\item\label{d:MV-algebras:item2} $x\oplus \neg0=\neg0$\,;
\item\label{d:MV-algebras:item3} $\neg(\neg x\oplus y)\oplus y = \neg(\neg y\oplus x)\oplus x$\,.
\end{enumerate}
\end{definition} 
Further derived operations can be defined as follows:
\begin{align*}
&1\df \neg0\,,\qquad x\odot y\df \neg(\neg x\oplus\neg y)\,,\qquad x\rightarrow y\df \neg x \oplus y\\
&x\vee y\df  \neg(\neg x\oplus y)\oplus y \quad \text{ and } \quad x\wedge y\df  x\odot(\neg x \oplus y),
\end{align*}
It can be proved that with these definitions $(A;\odot,\rightarrow,1,0)$ is a Wajsberg algebra and  $(A;\vee, \wedge,0,1)$ is a bounded distributive lattice with top and bottom. In fact, as shown in \cite[Lemma 4.2.2 and Theorem 4.2.5]{CDM}, the variety of Wajsberg algebras is term equivalent to the variety $\MVAlg$ of MV-algebras. Therefore, modulo a term equivalence, there is a functor $U\colon \MVAlg\to \WHoops$ that forgets the constant $0$. 

The kernels of homomorphisms of MV-algebras are called ideals and can be characterized as follows. A subset $I\subseteq A$ of an MV-algebra is an \emph{ideal} if and only if it is a submonoid of $(A;\oplus,0)$ which is downward closed with respect to the natural order $\leq$ of the MV-algebra.
 This may cause terminology clash, since kernels of MV-algebras are no longer kernels when considered in the language of Wajsberg algebras. In fact, in MV-algebras, the roles of filters and ideals are completely interchangeable. Indeed, for any MV-algebra  $(A;\oplus,\neg,0)$, the negation operation yields an isomorphism of MV-algebras:
\begin{equation}\label{eq:neg_iso}
\neg\colon (A;\oplus,\neg,0)\longrightarrow  (A;\odot,\neg,1)
\end{equation}
that maps ideals to filters and vice versa. For this reason, we will (unconventionally) call \emph{kernels} the filters given by the inverse image of $1$ under an arbitrary MV-homomorphism. 

\subsection{The MV-closure revisited}
When the \emph{basic setting} is instantiated in the case MV-algebras, the left adjoint to $U\colon \MVAlg\to \WHoops$ is called \emph{MV-closure} of a Wajsberg hoop, and has been described in \cite{ACD}. It is obtained by freely adding a neutral element $0$ for the operation $\oplus$ defined in an arbitrary Wajsberg hoop $(W;\odot,\rightarrow,1)$ as
\[
w\oplus w'\df(w\rightarrow (w \odot w'))\rightarrow w'\,,\qquad w,w'\in W\,.
\]
The resulting MV-algebra has an operation $\oplus$ that extends the former.

Here we revisit the construction given in \cite{ACD}, adapting it to our notational choices.
Consider a Wajsberg hoop $(W;\odot,\rightarrow,1)$. We define the MV-algebra
\[
M(W)\df(W\times \two; \oplus,\neg, 0)\,,
\]
where $0\df(1,0)$, $\neg (w,i)\df (w,1-i)$, and 
\begin{align*}
&(w,1)\oplus(w',1)\df (w\oplus w',1)\,,\qquad (w,0)\oplus(w',0)\df (w\odot w',0)\,,\\
&(w,0)\oplus(w',1)=(w',1)\oplus (w,0)\df (w\rightarrow w',1)\,.
\end{align*}
This construction is functorial by \cite[Corollary 3.7]{ACD} and, as explained in \cite{ACD}, it gives the left adjoint $M$ to the forgetful functor $U$. 

As can be deduced from \cite[Theorem 3.5]{ACD}, for every Wajsberg hoop $W$, the unit component of the adjunction $\eta_{W}\colon W\to U(M(W))$ is given by $\eta_W(w)\df(w,1)$.  Furthermore, $\two$ is a retract of $M(W)$ by the maps $p_W(w,i)\df i$ and $\sigma_W(i)\df (1,i)$.  Therefore, there is a split short exact sequence:
\[
\xymatrix{
W\ar[r]^-{\eta_W}&UM(W)\ar@<+.5ex>[r]^-{U(p_W)}&U(\two)\ar@<+.5ex>[l]^-{U(\sigma_W)}}.
\]
Notice that the split epimorphism $p_W$ is a homomorphism of MV-algebras, therefore $\sigma_W$ is its unique section in $\MVAlg$. Moreover, the kernel of $p_W$ computed in $\WHoops$ is the unit $\eta_W$ of the adjunction, thus condition \eqref{condition} holds.
The following statement translates Theorem \ref{thm:main}.
\begin{corollary}\label{cor:MV}
	The kernel functor 
	\[
	K\colon \MVAlg  / \two \to \WHoops
	\]
	defined on objects by letting, for $f\colon A\to \two$, $K(f)\df\Ker (f)$,
is an equivalence of categories. 
\end{corollary}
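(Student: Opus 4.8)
The plan is to obtain \Cref{cor:MV} as a direct instance of \Cref{thm:main}: all the ingredients have been prepared so that the forgetful functor $U\colon\MVAlg\to\WHoops$, together with its left adjoint, the MV-closure $M$, is a basic setting in the sense of \Cref{def:basic} satisfying condition \eqref{condition}, and then \Cref{thm:main} applies verbatim.

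First I would record the basic setting. By \Cref{c:WH-semi-abelian} the category $\WHoops$ is semi-abelian, hence in particular homological; and $U$, which forgets the constant $0$ after the term equivalence between MV-algebras and Wajsberg algebras, is algebraic, so by the results of \cite{Handbook2} already invoked it is a faithful conservative right adjoint, with left adjoint $M$ as described above following \cite{ACD}. This exhibits the adjunction of diagram \eqref{diag:adjunction}, whose initial object is $I=M(0)=L_2$, so that $\MVAlg/L_2=\MVAlg/I$.

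Next I would check condition \eqref{condition}, that is, that $\eta_W$ is an augmentation ideal for every Wajsberg hoop $W$. Using the explicit model $M(W)=W\times\{0,1\}$, the split epimorphism $p_W(w,i)=i$ onto $L_2$, with section $\sigma_W(i)=(0,i)$, is --- by the universal property of the unit $\eta_W(w)=(w,1)$ --- exactly the comparison map of diagram \eqref{diag:unit} and of \Cref{d:augmentation}. It then suffices to compute $\Ker(U(p_W))$ in $\WHoops$: with the convention, fixed above, that the kernel of a Wajsberg-hoop morphism is the preimage of $1$, this kernel is $\{(w,1)\mid w\in W\}$, which is precisely the image of $\eta_W$. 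Hence $\eta_W=\Ker(U(p_W))$, condition \eqref{condition} holds, and \Cref{thm:main} delivers the equivalence $K\colon\MVAlg/L_2\to\WHoops$, $K(f)=\Ker(f)$.

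The only step requiring an actual computation is the identification of $\Ker(U(p_W))$ with the image of $\eta_W$, and this is immediate from the formula $p_W(w,i)=i$; I therefore do not anticipate any genuine obstacle here. The substance of the argument has already been spent --- in \Cref{prop:Hoops_is_semi-abelian}, which shows that $\Hoops$, and hence $\WHoops$, is semi-abelian, and in importing the MV-closure construction from \cite{ACD} --- so that \Cref{cor:MV} amounts to matching these pieces against the hypotheses of \Cref{thm:main}.
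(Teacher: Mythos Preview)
Your proposal is correct and follows essentially the same route as the paper: the paragraphs immediately preceding \Cref{cor:MV} verify exactly the hypotheses you list---$\WHoops$ is homological by \Cref{c:WH-semi-abelian}, $U$ is an algebraic (hence faithful conservative) right adjoint with left adjoint the MV-closure $M$, $I=L_2$, and the explicit description of $M(W)$, $\eta_W$, and $p_W$ shows that $\eta_W=\Ker(U(p_W))$ so condition \eqref{condition} holds---after which the corollary is simply the specialization of \Cref{thm:main}. Your write-up makes the matching of hypotheses slightly more explicit, but there is no substantive difference in strategy or content.
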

As a consequence of Corollary \ref{cor:MV}, $W$ can be seen as a maximal filter of $M(W)$ (see also \cite[Theorem 3.5]{ACD}).

The inverse functor of the equivalence $K$ provides a \emph{unitalization functor}. 
As in the case of rings,  associative algebras, etc., the unitalization can be expressed as a semidirect product. A categorical reason for this is the fact that the slice category over an initial object coincides with the category of points and, in any semi-abelian category, points are equivalent to actions. In the case under investigation, this is better understood by making explicit the essential surjectivity of the inverse functor of $K$. To this end, let us consider a homomorphism of MV-algebras $f\colon A\to \two$. Since $\two$ is initial in  $\MVAlg$, $f$ comes equipped with a unique section $s\colon \two\to A$ which is an MV-homomorphism and witnesses the fact that $f$ is a retraction. Let us call $W$ the filter $f^{-1}(1)$, which is the kernel of $f$ with respect to its hoop structure $(\odot, \rightarrow,1)$. Then the assignment $\phi\colon M(W)\to A$ given by 
\begin{equation}\label{op:MV}
(w,i)\quad \mapsto \quad (s(i)\rightarrow w)\odot(w\rightarrow s(i))
\end{equation}
is an isomorphism, as we check by a case inspection.  Indeed,  $\phi$ preserves $\oplus$:
\begin{eqnarray*}
	\phi(w,1)\oplus\phi(w',1)&=& (1\rightarrow w)\odot(w\rightarrow 1)\oplus(1\rightarrow w')\odot(w'\rightarrow 1)\\
	&=& w\odot 1\oplus w'\odot 1\\
	&=& w\oplus w'\\
	&=& (1\rightarrow w\oplus w')\odot(w\oplus w'\rightarrow 1)\\
	&=&\phi(w\oplus w',1)\\
	&=&\phi((w,1)\oplus (w',1))\,,
\end{eqnarray*}
\begin{eqnarray*}
	\phi(w,0)\oplus\phi(w',1)&=& (0\rightarrow w)\odot(w\rightarrow 0)\oplus(1\rightarrow w')\odot(w'\rightarrow 1)\\
	&=& 1\odot \neg w\oplus w'\odot 1\\
	&=& \neg w\oplus w'\\
	&=& w\rightarrow w'\\
	&=&\phi(w\rightarrow  w',1)\\
	&=& \phi((w,0)\oplus(w',1))\,,
\end{eqnarray*}
\begin{eqnarray*}
	\phi(w,0)\oplus\phi(w',0)&=& (0\rightarrow w)\odot(w\rightarrow 0)\oplus(0\rightarrow w')\odot(w'\rightarrow 0)\\
	&=& 1\odot \neg w\oplus 1\odot \neg w'\\
	&=& \neg w\oplus \neg w'\\
	&=& \neg (w\odot  w')\\
	&=& (0 \rightarrow w\odot  w')\odot (w\odot  w'\rightarrow 0) \\
	&=&\phi(w\odot w' ,0)\\
	&=&\phi((w,0)\oplus(w',0))\,;
\end{eqnarray*}

\noindent $\phi$ preserves the negation:\\
\begin{minipage}{0.5\textwidth}
\begin{eqnarray*}
	\neg \phi(w,1)&=& \neg((1\rightarrow w)\odot(w\rightarrow 1))\\
	&=& \neg (w\odot 1)\\
	&=& \neg w\\
	&=& (0\rightarrow w)\odot(w\rightarrow 0) \\
	&=&\phi( w,0)\\
	&=& \phi(\neg (w,1))\,,
\end{eqnarray*}
\end{minipage}
\begin{minipage}{0.5\textwidth}
\begin{eqnarray*}
	\neg \phi(w,0)&=& \neg((0\rightarrow w)\odot(w\rightarrow 0))\\
	&=& \neg (1\odot \neg w)\\
	&=& w\\
	&=& (1\rightarrow w)\odot(w\rightarrow 1) \\
	&=&\phi( w,1)\\
	&=& \phi(\neg (w,0))\,.
\end{eqnarray*}
\end{minipage}\\

\noindent Finally, $\phi$ preserves the constant:
\[\phi(1,0)= (0\rightarrow 1)\odot(1\rightarrow 0)= 1\odot 0=0.\]

This shows that the map $\phi$ is a homomorphism of MV-algebras. Since $\phi$ commutes with the kernels, the sections and the retractions, is indeed an isomorphism, by the split short five lemma.

\section*{Acknowledgments}

The authors are grateful to George Janelidze for valuable comments that improved the first version of this paper, and  for suggesting  the observations gathered in Remark~\ref{rem:1} and Remark~\ref{rem:2}.

We acknowledge financial support under the National Recovery and Resilience Plan (NRRP), Mission 4, Component 2, Investment 1.1, Call for tender No. 1409 published on 14.9.2022 by the Italian Ministry of University and Research (MUR), funded by the European Union – NextGenerationEU– Project Title Quantum Models for Logic, Computation and Natural Processes (Qm4Np) – CUP F53D23011170001 - Grant Assignment Decree No. 1016 adopted on 07/07/2023 by the Italian Ministry of Ministry of University and Research (MUR).

The research was partially supported by GNSAGA - INdAM.

\end{document}